\documentclass[preprint, 12pt, sort&compress]{elsarticle}

\usepackage{latexsym}
\usepackage{amsfonts}
\usepackage{graphicx,amssymb,natbib}
\usepackage{amsmath}
\usepackage{amssymb}
\usepackage[mathscr]{eucal}
\usepackage{enumerate}
\usepackage{amsthm}

\usepackage{bm}

\usepackage{color, soul} 






\begin{document}

\newtheorem{tm}{Theorem}[section]
\newtheorem{pp}[tm]{Proposition}
\newtheorem{lm}[tm]{Lemma}
\newtheorem{df}[tm]{Definition}
\newtheorem{tl}[tm]{Corollary}
\newtheorem{re}[tm]{Remark}
\newtheorem{eap}[tm]{Example}

\newcommand{\pof}{\noindent {\bf Proof} }
\newcommand{\ep}{$\quad \Box$}

\newcommand{\al}{\alpha}
\newcommand{\be}{\beta}
\newcommand{\var}{\varepsilon}
\newcommand{\la}{\lambda}
\newcommand{\de}{\delta}
\newcommand{\str}{\stackrel}

\renewcommand{\proofname}{\bf Proof}

\allowdisplaybreaks

\begin{frontmatter}

\title{Some properties of Skorokhod metric on fuzzy sets
 }
\author{Huan Huang}
 \ead{hhuangjy@126.com }
\address{Department of Mathematics, Jimei
University, Xiamen 361021, China}


\date{}

\begin{abstract}

In this paper, we have our discussions on normal and upper semi-continuous fuzzy sets on metric spaces.
The Skorokhod-type metric is stronger than the Skorokhod metric.
  It
is
found that
 the Skorokhod metric and the Skorokhod-type metric are equivalent on compact fuzzy sets.
However,
 the Skorokhod metric and the Skorokhod-type metric need not be equivalent
on
$L_p$-integrable fuzzy sets.
Based on this,
we
investigate relations between these two metrics
and
the $L_p$-type $d_p$ metric.
It is found
that
the relations can be divided into
three cases.
On compact fuzzy sets, the Skorokhod metric is stronger than the $d_p$ metric.
On $L_p$-integrable fuzzy sets, which take compact fuzzy sets as special cases,
the Skorokhod metric is not necessarily stronger than the $d_p$ metric,
but the Skorokhod-type metric is still stronger than
the $d_p$ metric.
On general fuzzy sets, even the Skorokhod-type metric is not necessarily stronger than
the $d_p$ metric.
We also show that
the Skorokhod metric is stronger than
the sendograph metric.
\end{abstract}

\begin{keyword}
Skorokhod metric;   $L_p$ metric;    Endograph metric;   Sendograph metric;  Hausdorff metric;
\end{keyword}

\end{frontmatter}

\section{Introduction}

Skorokhod metric on fuzzy sets has received deserving attentions.
Joo and Kim \cite{joo} introduced
the Skorokhod metric and the Skorokhod-type metric.
They \cite{joo,joo2}
have proven that
the Skorokhod metric and the Skorokhod-type metric are equivalent.
Joo and Kim \cite{joo2}
have pointed out
that
the Skorokhod metric is stronger than the $d_p$ metric.
Kim and Kim \cite{kim} have proven
that
the Skorokhod metric is stronger than the sendograph metric.
These results are
obtained
 on the set of normal, upper semi-continuous
and
compact fuzzy sets on $\mathbb{R}^m$.
Recently,
Jard\'{o}n, S\'{a}nchez and Sanchis
\cite{jarn}
discussed the Skorokhod metric on compact fuzzy sets on metric space rather than just on $\mathbb{R}^m$.
Here, a compact fuzzy set is a fuzzy set whose support set is compact, and a noncompact fuzzy set is a general fuzzy set whose support set may be compact or non-compact.

The $L_p$-type $d_p$ metric,
endograph metric and sendograph metric
are
important and widely used metrics on fuzzy sets \cite{da, du, kloeden, wu, kupka, grzegorzewski, grzegorzewski2, huang, huang9}.
 Noncompact fuzzy sets, which take compact fuzzy sets as special cases, have received considerable attention
from points of view of theory and practical applications \cite{kloeden, huang, grzegorzewski}.
The Euclidean space $\mathbb{R}^m$
is
a kind of metric space.
Of course, it is worthy to study fuzzy sets on general metric space.

So,
it is important and natural to consider relation of the Skorokhod metric and the Skorokhod-type metric,
and
relation of these two metrics
with
 the $d_p$ metric,
the endograph metric and the sendograph metric, respectively,
on noncompact fuzzy sets on metric space.
 In
 this paper, the discussions are carried out on normal and upper semi-continuous fuzzy sets on metric space.
We
assume that all the fuzzy sets mentioned in this paper are fuzzy sets of this type.

We confirm
 that
 the
 relations of the Skorokhod metric, the Skorokhod-type metric and the $d_p$ metric
  obtained in \cite{joo,joo2} still hold
  on compact fuzzy sets.

 However, we find that the
 relations of the Skorokhod metric, the Skorokhod-type metric and the $d_p$ metric
 on noncompact fuzzy sets
  are
   quite different from the case of compact fuzzy sets. This is the focus of our paper.

   We show
   that
   the Skorokhod metric is stronger than the sendograph metric and the endograph metric.

The remainder of this paper is organized as follows.
In Section 2, we recall some basic notions about fuzzy sets and various type of metrics on fuzzy sets.
Then
we
 introduce some subclasses of fuzzy sets and have
some
discussions
related to metrics on fuzzy sets, which are useful in the sequel of this paper.
In Section 3, we discuss the relation
of
the Skorokhod metric and the Skorokhod-type metric.
In Sections 4, 5 and 6, we investigate relations between the two metrics, the Skorokhod metric and the Skorokhod-type metric,
and
the $d_p$ metric.
The relation is divided into three cases.
 We
 mainly discuss one case in each section.
In Section 7, we consider relation of the Skorokhod metric and the two metrics, the sendograph metric and the endograph metric.
In Section 8, we give a simple example to answer some questions discussed recently.
  At last, we draw our conclusions in Section 9.

\section{Fuzzy sets and metrics on them} \label{fsm}

In this section, we recall basic notions about fuzzy sets and metrics on fuzzy sets.
Readers
can refer to \cite{wu, da} for more contents.
We
also introduce some subclasses of fuzzy sets and have
some
discussions
 related to metrics on fuzzy sets.

Let $(X,d)$ be a metric space and let $K(X)$ and
 $C(X)$
 denote the set of all non-empty compact subsets of $X$ and the set of all non-empty closed subsets of $X$, respectively.

Metric $d_a$ is said to be stronger than metric $d_b$ on $X$, if the metric $d_a$ convergence can imply the metric $d_b$ convergence
on
$X$.
Two
metrics $d_a$ and $d_b$ are called to be equivalent on $X$ iff $d_a$ is stronger than $d_b$ on $X$ and $d_b$ is stronger than $d_a$ on $X$.

Let
$F(X)$
denote the set of
all fuzzy sets on $X$. A fuzzy set $u\in F(X)$ can be seen as a function $u:X \to [0,1]$.
In this sense,
a
subset $S$ of $X$ can be seen as a fuzzy set
\[\widehat{S}(x) = \left\{
                    \begin{array}{ll}
                      1, & x\in S, \\
                      0, & x\not\in S.
                    \end{array}
                  \right.
\]

For
$u\in F(X)$, let $[u]_{\al}$ denote the $\al$-cut of
$u$, i.e.
\[
[u]_{\al}=\begin{cases}
\{x\in X : u(x)\geq \al \}, & \ \al\in(0,1],
\\
{\rm supp}\, u=\overline{    \{ u > 0 \}    }, & \ \al=0.
\end{cases}
\]

For
$u\in F(X)$,
define
\begin{gather*}
{\rm end}\, u:= \{ (x, t)\in  X \times [0,1]: u(x) \geq t\},
\\
{\rm send}\, u:= \{ (x, t)\in  X \times [0,1]: u(x) \geq t\} \cap  ([u]_0 \times [0,1]).
\end{gather*}
 $
{\rm end}\, u$ and ${\rm send}\, u$
 are called the endograph and the sendograph of $u$, respectively. The metric $\overline{d}$ on $X \times [0,1]$ is defined
as
$$  \overline{d } ((x,\al), (y, \beta)) = d(x,y) + |\al-\beta| .$$

Let
$F_{USC}^1(X)$
denote
the set of all normal and upper semi-continuous fuzzy sets $u:X \to [0,1]$,
i.e.,
$$F_{USC}^1(X) :=\{ u\in F(X) : [u]_\al \in  C(X)  \  \mbox{for all} \   \al \in [0,1]   \}.  $$

 We
use $H$ to denote the Hausdorff metric
on
 $C(X)$ induced by $d$, i.e.,
$$
H(U,V)=\max\{H^{*}(U,V),\ H^{*}(V,U)\}
$$
for arbitrary $U,V\in C(X)$,
where
  $$
H^{*}(U,V)=\sup\limits_{u\in U}\,d\, (u,V) =\sup\limits_{u\in U}\inf\limits_{v\in
V}d\, (u,v).
$$
If there is no confusion, we also use $H$ to denote the Hausdorff metric on $C(X\times [0,1])$ induced by $\overline{d}$.

The endograph metric $H_{\rm end} $, the sendograph metric $H_{\rm send} $,
the supremum metric $d_\infty$, the Skorokhod metric $\rho_0$ and the Skorokhod-type metric
$\rho_1$
can be defined on $F_{USC}^1(X)$ as usual.
The readers can
see
\cite{huang,da,jarn,joo,wug,wu,kloeden} for related contents.

For $u,v \in F_{USC}^1(X)$,
\begin{gather*}
H_{\rm end}(u,v): =  H({\rm end}\, u,  {\rm end}\, v ),
\\
H_{\rm send}(u,v): =  H({\rm send}\, u,  {\rm send}\, v ),
\\
d_\infty (u,v): =  \sup \{H([u]_\al, [v]_\al): \al\in [0,1]\},
\\
\rho_0(u,v):= \inf \{\varepsilon:  \mbox{there  exists a} \ t \  \mbox{in} \ T
\
\mbox{such that}
\ D(t) \leq \varepsilon \ \mbox{and} \ d_\infty (u,tv)\leq \varepsilon \},
\\
\rho_1(u,v):= \inf \{\varepsilon:  \mbox{there  exists a} \ t \  \mbox{in} \ T
\
\mbox{such that}
\ S(t) \leq \varepsilon \ \mbox{and} \ d_\infty (u,tv)\leq \varepsilon \}
  \end{gather*}
where
 $T$ is the class of strictly increasing, continuous
mapping of $[0,1]$ onto $[0,1]$,
\begin{gather*}
D(t): = \sup\{|t(\al)-\alpha|:\al\in[0,1]\}, \    \mbox{and}
\\
S(t): = \sup\{ | \ln\frac{t(\beta) - t(\al)}{\beta-\al} |    :\al \not= \beta,\  \al,\beta \in[0,1]  \}
\end{gather*}
for each $t\in T$.

\begin{re}
  {\rm

In \cite{joo}, $\rho_0$ and $\rho_1$ are written as $d_0$ and $d_1$, respectively.
It's
 also mentioned in \cite{joo}
that
 $D(t) = D(t^{-1})$
and $S(t) = S(t^{-1})$ for all $t\in T$. In this paper, we call $\rho_1$ the ``Skorokhod-type'' metric.

 }
\end{re}

\begin{re}\label{ser}
{\rm
It can be checked that for $u$, $u_n$, $n=1,2,\ldots$, in $F^1_{USC} (X)$,
$H_{\rm send} (u_n, u) \to 0$ is equivalent to $  H_{\rm end} (u_n, u) \to 0 $ and  $ H([u_n]_0, [u]_0) \to 0$.
  }
\end{re}

The $d_p$ metrics, $p \geq 1$, are widely used   $L_p$-type metrics on fuzzy set spaces,
which
are very important from points of view of theory and applications.

The $d_p$ metric
can
be defined    on $ F^1_{USC} (X)$
as usual, i.e.,
$$d_p(u,v) :=   \left(\int_0^1 H([u]_\al, [v]_\al   )^p  \,   d\al   \right)^{1/p} $$
for
 $u,v \in  F^1_{USC} (X)$ when $H([u]_\bullet, [v]_\bullet)$ is a measurable function on $[0,1]$.

\begin{df} We introduce the following subsets of $ F^1_{USC}(X)$, which will be useful in the sequel.

$F^1_{USCB}(X):=\{ u\in  F^1_{USC}(X): [u]_0 \in K(X) \}$.

$F^1_{USCG}(X):=\{ u\in  F^1_{USC}(X): [u]_\al \in K(X) \ \mbox{when} \   \al>0 \}$.

$F^1_{USCG} (X)^p  :=  \{ u\in  F^1_{USCG}(X):   d_p(u, \widehat{\{x_0\}} ) = (\int_0^1 H([u]_\al, \{x_0\}   )^p  \,   d\al)^{1/p} <  +\infty     \}$,
where $p\geq 1$ and $x_0$ is a point in $X$.

\end{df}
 The definition of $F^1_{USCG} (X)^p$ does not depend on the choice of $x_0$.
Clearly,
 $$F^1_{USCB}(X) \subset  F^1_{USCG} (X)^p   \subset  F^1_{USCG}(X)   \subset  F^1_{USC}(X).$$
Let $t\in T$. Then
$tu \in F^1_{USCB}(X)$
iff $u\in F^1_{USCB}(X)$, and $u\in F^1_{USCG}(X)$
iff $tu \in F^1_{USCG}(X)$.

Let
 $\mathbb{R}^m$, $m\geq 1$,
be the $m$-dimensional Euclidean space. Usually, we write $\mathbb{R}^1$ as $\mathbb{R}$ for simplicity.
It can be checked that the following statements hold.
\begin{itemize}
  \item  For $u \in  F^1_{USC}(X)$ and $x_0 \in X$,  $H([u]_\bullet, \{x_0\}) $ is a measurable function on $[0,1]$.

  \item For $u,v \in  F^1_{USCG}(X)$, $H([u]_\bullet, [v]_\bullet) $ is a measurable function on $[0,1]$.

\item  For    $u,v \in  F^1_{USC} (\mathbb{R}^m)$, $H([u]_\bullet, [v]_\bullet) $ is a measurable function on $[0,1]$.

\item  There exists metric space $X$ and $u,v\in F^1_{USC}(X)$ such that $H([u]_\bullet, [v]_\bullet) $ is a non-measurable function
on
$[0,1]$.
\end{itemize}

\begin{re}

In \cite{jarn}, the symbol
$\mathcal{F}(X)$
is used to denote $F_{USCB}^1(X)$.

\end{re}

The set of (compact) fuzzy numbers
are
denoted by
$E^m$. It
is
defined as
$$
E^m:=
\{ u\in F(\mathbb{R}^m):  [u]_\al  \mbox{ is a nonempty compact convex subset of } \mathbb{R}^m \mbox{ for } \al\in [0,1] \}.
$$
Fuzzy numbers have attracted much attention from theoretical research and practical applications \cite{da}.

For $u,v \in F^1_{USC}(X)$,  $H([u]_\bullet, [v]_\bullet)$ could be a non-measurable function on $[0,1]$.
So
we introduce the following $d_p^*$ distance on $F^1_{USC} (X)$.

The
 $d_p^*$ distance, $p\geq 1$, on $F^1_{USC} (X)$,
 is defined as
$$
d_p^*(u,v) := \inf \left\{    \left(\int_0^1 f(\al)^p  \,   d\al   \right)^{1/p}
 :
f \mbox{ is a measurable function on}\ [0,1] \mbox{with} \  f(\cdot)  \geq H([u]_\bullet, [v]_\bullet)     \right\}
$$
for
$u,v \in F^1_{USC} (X)$.

\begin{tm} \label{dpsm}
  $d_p^*$ is a metric on $F^1_{USC} (X)$.
\end{tm}

\begin{proof}
  See Appendix.
\end{proof}

\begin{re}{\rm
 Clearly, $d_p^*(u,v)= d_p(u,v)$   when $H([u]_\bullet, [v]_\bullet)$ is a measurable function on $[0,1]$.
So
 $d_p^*$ metric is an expansion of the $d_p$ metric on $F^1_{USC} (X)$.

In the sequel, we don't distinguish between $d_p^*$ and $d_p$, both of which are written as $d_p$.
}
\end{re}

\section{Relation between   $\rho_0$ and $\rho_1$ on $F^1_{USC}(X)$}

Joo and Kim \cite{joo} have proven
that
the Skorokhod metric $\rho_0$ and the Skorokhod-type metric $\rho_1$ are equivalent on
$E$.
Further,
they \cite{joo2} pointed out
that
this relation still holds on $F^1_{USCB}(\mathbb{R}^m)$.

In this section, we discuss the relation of the Skorokhod metric $\rho_0$ and the Skorokhod-type metric $\rho_1$ on $F^1_{USC}(X)$.
The $\rho_1$ convergence can still imply the $\rho_0$ convergence on $F^1_{USC} (X)$. This statement can be deduced in the same way as the corresponding conclusion on $E$ in \cite{joo}.
By establishing some lemmas,
we point out that the method in \cite{joo} can also be used to prove
the equivalence
of
 $\rho_0$ and $\rho_1$
on
$F^1_{USCB} (X)$.
However,
we find
that
these two metrics are not necessarily equivalent on $F^1_{USCG} (X)^p$, which is larger than $F^1_{USCB} (X)$.
A
counterexample is given to
 show that
the $\rho_0$ convergence need not imply the $\rho_1$ convergence on $F^1_{USCG} (\mathbb{R})^p$.

\begin{lm} \label{der}

(\romannumeral1) $D(t) \to 0$ need not imply $S(t) \to 0$.

(\romannumeral2) $S(t) \to 0$ implies that $D(t) \to 0$.

\end{lm}

\begin{proof}
  (\romannumeral1)
Example \ref{tgf} provides a counterexample to show this statement.

(\romannumeral2)
This statement is from \cite{joo}.
In fact,
 it
can
 be deduced from the proof of Lemma 3.5 in \cite{joo}.

\end{proof}

\begin{eap} \label{tgf}
 {\rm
Consider
$t_a \in T$, $a\in (0,1)$, defined as
\[
 t_a(\xi)=\left\{
            \begin{array}{ll}
              \sqrt{\xi}, & \xi\in [0,a], \\
              \dfrac{  1-\sqrt{a}     }    {1-a} \xi +  \dfrac{   \sqrt{a} - a  }  {1-a}, &  \xi \in [a, 1].
            \end{array}
          \right.
\]
It
can be checked that $D(t_a)= |a- \sqrt{a}|$ for $a\leq \frac{1}{4}$ and $S(t_a) \equiv +\infty$ for each $a\in (0,1)$.
Thus
$D(t_a) \to  0$ as $a\to 0$. However $S(t_a) \not \to 0$ as $a\to 0$.

 }
\end{eap}

\begin{pp} \label{r0m}
  $\rho_1$ is stronger than $\rho_0$ on $F^1_{USC} (X)$.
\end{pp}

\begin{proof}
  The desired result follows immediately from Lemma \ref{der}.
\end{proof}

The following statement may be a known result. But we can not find the original literature which presents this fact.

\begin{lm} \label{lrm}
 Let $\{u_n\} \subset    K(X)$ satisfy
$ u_1 \supseteq u_2 \supseteq \ldots \supseteq u_n \supseteq \ldots.  $
Then
$u= \bigcap_{n=1}^{+\infty}  u_n \in K(X)$ and
$H(u_n, u) \to 0  \ \hbox{as} \ n\to \infty$.

On the other hand, if $\{u_n\} \subset    K(X)$, $ u_1 \subseteq u_2 \subseteq \ldots \subseteq u_n \subseteq \ldots  $
and
$u= \overline{\bigcup_{n=1}^{+\infty}  u_n }\in K(X)$,
then
$H(u_n, u) \to 0  \ \hbox{as} \ n\to \infty$.

\end{lm}

\begin{proof}
  The desired result follows from the basic topology.
\end{proof}

Suppose that $u\in F^1_{USCG} (X)$, $\al, \beta \in [0,1]$ and $\al <  \beta$.
The ``variation"   $w_u(\al, \beta)$ is defined as
$$w_u(\al, \beta): = \sup \{ H([u]_\xi,  [u]_\eta):  \xi,\eta \in (\al, \beta]  \}.$$

The concept ``$w_u(\al, \beta)$" is from \cite{joo}. The following Lemma \ref{vem} is the version of fuzzy sets on metric space $(X,d)$ of Lemma 3.2 in \cite{joo}.

\begin{lm} \label{vem}
Suppose that $u \in F^1_{USCB} (X)$. Given $\varepsilon>0$. Then there exist
$\al_1$, $\al_2$, \ldots, $\al_k$ such that
$0=\al_1< \al_2<\cdots<\al_k=1$
and
$$w_u(\al_{i}, \al_{i+1} ) < \varepsilon, \   i=1,2,\ldots, k-1.$$

\end{lm}

\begin{proof}
    Consider
$[u](\cdot) : [0,1] \to (K(X), H)$, which is the cut-function
of
$u$ and      is defined by $[u](\al) = [u]_\al$.

From Lemma \ref{lrm},
$[u](\cdot)$
is
 left-continuous at $\al \in  (0,1]$
and
$$\lim_{\zeta \to h+} H([u]_\zeta, \overline{\bigcup \{[u]_\al : \al > h \}} ) =0 $$
for
$h\in [0,1)$.

The remainder proof can proceed similarly to the proof of Lemma 3.2 in \cite{joo}.
\end{proof}

\begin{pp} \label{esp}
  $\rho_0$ and $\rho_1$ are equivalent on $F^1_{USCB} (X)$.
\end{pp}

\begin{proof}
Using Lemma \ref{vem},
  the proof can proceed similarly to the proof of Lemma 3.7 and Theorem 3.8 in \cite{joo}.
\end{proof}

However, on
$F^1_{USCG}(X)^p$, which is larger than $F^1_{USCB}(X)$,
$\rho_0$ and $\rho_1$ need not be equivalent. A counterexample is given in the following.

\begin{eap}   \label{r0n}{\rm
    Consider     $u\in F^1_{USCG}(\mathbb{R})$ defined as
$$[u]_\gamma = [0, \gamma^{-0.4}]$$ for all $\gamma>0$.

 Clearly, $\rho_{0} (t_a u, u) \to 0$ as $a\to 0$, where $t_a$ is defined as in Example \ref{tgf}.
However, it can be checked that $\rho_1 (t_a u, u) \not\to 0$ as $a\to 0$.
In fact, note that
$S(t) \geq \ln \frac {\beta}{t^{-1}\beta}$ when $\beta\in (0,1]$,
so
 for each $t\in T$ with $S(t)< +\infty$, there is a $K \geq 1$ such that
$t^{-1}(\beta) \geq  \beta/K$ for all $\beta \in [0,1]$.
Thus for $a\in (0,1)$
\begin{align*}
 d_\infty & (tu, t_a u)
\\
& \geq  \sup_{\beta < a}   H   (  [tu]_{ \sqrt{\beta}   },      [t_a u]_{ \sqrt{\beta}  }    )
\\
&= \sup_{\beta < a} H(  [u]_{ t^{-1} \sqrt{\beta}   },      [ u]_{ \beta  }    )  =  +\infty,
\end{align*}
and then
$\rho_1(u, t_a u) = +\infty$.

Note that $u$
and $t_a u $ belong to $F^1_{USCG} (\mathbb{R})^1$,
so
 $\rho_0$ convergence need not imply $\rho_1$ convergence on $F^1_{USCG}(\mathbb{R})^1$.
A similar example can show that
 the $\rho_0$ convergence need not imply the $\rho_1$ convergence on $F^1_{USCG}(\mathbb{R})^p$.

}
\end{eap}

Clearly, Example \ref{r0n} indicates
that
$D(t) \to 0$ need not imply $S(t) \to 0$.
The analysis in Example \ref{r0n} also implies that $S(t_a) = +\infty$ for $a\in (0,1)$.

 Wu, Zhang and Chen \cite{wug}
 proposed an example that a contraction whose Zadeh's extension is not a contraction under the Skorokhod metric and negatively answered
 the corresponding
 questions asked by Jard\'{o}n, S\'{a}nchez and Sanchis \cite{jarn}.
In \cite{huang10}, we give a simple example to answer the questions.

\section{Relation between Skorokhod metric $\rho_0$ and $d_p$ metric on $F^1_{USCB} (X)$}
\label{uscb}

In this section, it is shown that the Skorokhod metric $\rho_0$ is stronger than the $d_p$ metric on $F^1_{USCB} (X)$.
However,
this is not the case with $F^1_{USCG} (X)^p$, which is larger than $F^1_{USCB} (X)$.
A counterexample
is given
to show that
 the Skorokhod metric $\rho_0$ convergence need not imply the $d_p$ metric convergence
 on
$F^1_{USCG} (\mathbb{R})^p$.

\begin{lm} \label{pen} Let $u \in F_{USCB}^1(X)$.
Then
  $d_p(u, tu) \to 0$ as $D(t) \to 0$.
\end{lm}

\begin{proof}
  Given $\varepsilon>0$. From Lemma \ref{vem},
there exist points $\al_1$, \ldots, $\al_k$ such that
$0=\al_1 < \al_2 < \al_k=1$
and
$w_u(\al_l, \al_{l+1})  \leq \varepsilon/3$ for all $1\leq l \leq k-1$.

Let
$$ M := H([u]_0, [u]_1).  $$
Then
for each $t \in T$,
\begin{align*}
  d_p   (u, t u) &   =   \left(\int_0^{1}    H([u]_\alpha, [t u]_\al )^p \;d\alpha  \right)^{1/p}
 \\
 & \leq  \sum_{l=1}^{k-1} \left(\int_{\al_{l}}^{\al_{l+1}}     H([u]_\alpha, [t u]_\al )^p \;d\alpha  \right)^{1/p}
\\
& \leq (k-1)  M \cdot (2 D(t) )^{1/p}     +   \varepsilon/3.
\end{align*}
Thus, there is a $\zeta(\varepsilon)$ such that
 \begin{equation*} d_p   (u, t u)   \leq   \varepsilon \end{equation*}
for all $t\in T$ with $D(t) < \zeta$.

\end{proof}

\begin{tm} \label{dpc}
Suppose that $u \in F^1_{USCB}(X)$, $u_n \in F^1_{USC}(X)$, $n=1,2,\ldots$.
If $\rho_0(u_n,u) \to 0$, then $d_p(u_n, u) \to 0$.

\end{tm}

\begin{proof}
Given $\varepsilon>0$. From Lemma \ref{pen} there is a $\zeta>0$ such that
$d_p(u, tu) < \varepsilon/2$
for all $D(t) < \zeta$.

Since
 $\rho_0(u_n, u) \to 0$, then there exists an $N$
such that
 $\rho_0(u_n, u) < \eta= \min\{\varepsilon/2,  \zeta\}$ for $n \geq N$.
This means
that, for each $n \geq N$,
there is a $t_n \in T$ such that
$D(t_n) < \eta $
and
$d_\infty (u_n, t_n u) < \eta$.

So
\begin{align*}
  d_p  (u_n, u) & \leq   d_p (u_n, t_n u) +   d_p (t_n u,  u)
\\
& \leq   \eta + \varepsilon/2 \leq \varepsilon
\end{align*}
for all $n \geq N$.

\end{proof}

The converse of the implication in Theorem \ref{dpc} does not hold.
$\{u_n\}$ and $u$ in
Example \ref{enr} is a counterexample
 shows that
the $d_p$ metric convergence
need not imply
the Skorokhod metric $\rho_0$ convergence on $F^1_{USCB} (\mathbb{R})$.

 Theorem \ref{dpc} is not true
if
$F^1_{USCB} (X) $ is replaced
by
$F^1_{USCG} (X)^p $, which is larger than $F^1_{USCB} (X) $.
An
example
is given in the following
to show
that
the Skorokhod metric $\rho_0$ convergence need not imply
the $d_p$ metric convergence
on
  $F^1_{USCG} (\mathbb{R})^p $.

\begin{eap} \label{r0np}
{\rm

 Consider $t_{a,\theta}$, $(a,\theta) \in (0,1) \times  (0,1)$, defined as
 \[
t_{a,\theta} (\xi)=\left\{
      \begin{array}{ll}
        \xi^\theta, & \xi\in [0,a], \\
         \dfrac{  1-a^\theta     }    {1-a} \xi +  \dfrac{   a^\theta - a  }  {1-a}, &   \xi\in [a,1].
      \end{array}
    \right.
\]
Then
$t_{a,\theta} \in T$.

 Consider
 $u\in F^1_{USCG}(\mathbb{R})$ defined as
$$[u]_\gamma = [0, \gamma^{-0.6}]$$ for all $\gamma>0$.
Then
\begin{align*}
 d_1(u, \widehat{\{0\}}) & =   \int_0^{1}  H([u]_{\alpha}, \{0\})  \;d\alpha
\\
& = \int_0^{1} \alpha^{-0.6}  \;d\alpha  =2.5,
\end{align*}
and therefore $u \in  F^1_{USCG}(\mathbb{R})^1 $.

Note that
\begin{align*}
 d_1((t_{a, \theta} u, \widehat{\{0\}}) & =   \int_0^{1}  H([t_{a, \theta} u]_{\gamma}, \{0\})  \;d\gamma
\\
& = \int_0^{a^\theta} \gamma^{-0.6/\theta}  \;d\gamma    +    \int_{a^\theta}^1    H([t_{a, \theta} u]_{\gamma}, \{0\})  \;d\gamma,
\end{align*}
and
\begin{align*}
  d_1 (t_{a, \theta}u, u) & = \int_0^{1}  H([t_{a,\theta} u]_{\gamma}, [u]_\gamma) \;d\gamma
\\
& \geq    \int_0^{a^\theta}  H([t_{a,\theta} u]_{\gamma}, [u]_\gamma) \;d\gamma
\\
&  =   \int_0^{a^\theta} |\gamma^{-0.6/\theta}  -   \gamma^{-0.6} | \;d\gamma,
\end{align*}
thus there exist $a_n \to 0+$ and $\theta_n \to 0.6+$ such that
\begin{gather*}
  d_1((t_{a_n, \theta_n} u, \widehat{\{0\}})     <   +\infty,
 \\
          d_1 (t_{a_n, \theta_n}u, u) > 0.5.
 \end{gather*}
 So
$t_{a_n, \theta_n} u \in F^1_{USCG}(\mathbb{R})^1 $,
$\rho_0( t_{a_n, \theta_n} u, u   ) \to 0$ and  $ d_1 (t_{a_n, \theta_n}u, u) \not\to 0 $.

It can be shown by a similar example
 that the $\rho_0$ convergence
need not imply
the $d_p$ convergence on $ F^1_{USCG}(\mathbb{R})^p$.

}
\end{eap}

\section{Relation between Skorokhod-type metric $\rho_1$ and $d_p$ metric on $F^1_{USCG}(X)^p$} \label{uscglp}

In this section,
 we
first discuss some basic properties of $u$ in $F^1_{USCG}(X)^p$.
Then
we find a fact
that for $u \in F^1_{USCG}(X)^p$,
$d_p(u,tu)\to 0$ as $S(t)\to 0$.
Based on this,
we
show
that the Skorokhod-type metric $\rho_1$ is stronger
than
 the $d_p$ metric on $F^1_{USCG}(X)^p$.
A counterexample
is given to show that the $d_p$ metric is not stronger than
the Skorokhod-type metric $\rho_1$
on
$F^1_{USCG}(\mathbb{R})^p \setminus F^1_{USCB}(\mathbb{R})$.

\begin{tm} \label{tulg}
  Suppose that $u \in F^1_{USCG}(X)^p$ and $t\in T$. If $S(t)< +\infty$,
then
  $tu \in    F^1_{USCG}(X)^p$.
\end{tm}

\begin{proof} \label{tup}
Suppose that
$S(t) < +\infty$.
Then there is a $K \geq 1$ such
that
$t^{-1}(\beta) \geq  \beta/K$ for all $\beta \in [0,1]$.

Thus
\begin{align*}
  d_p     (t u, \widehat{\{x_0\}}  )
   & =  \left(\int_0^{1}    H([tu]_\alpha, \{x_0\})^p \;d\alpha  \right)^{1/p}
\\
&  \leq   \left(\int_0^{1}    H([u]_{\alpha/K}, \{x_0\})^p \;d\alpha  \right)^{1/p}
\\
&    =    \left(\int_0^{1/K}   K H([u]_{\alpha}, \{x_0\})^p \;d\alpha  \right)^{1/p}
\\
&   \leq  K^{1/p}    \left(\int_0^{1}  H([u]_{\alpha}, \{x_0\})^p \;d\alpha  \right)^{1/p}
\\
&  =     K^{1/p}  d_p     (u, \widehat{\{x_0\}} ) .
\end{align*}
So  $tu \in    F^1_{USCG}(X)^p$.

\end{proof}

\begin{tm}
Suppose that $t\in T$ with $S(t)< +\infty$.
Then
    $u \in F^1_{USCG}(X)^p$
    is equivalent to
  $tu \in    F^1_{USCG}(X)^p$.
\end{tm}

\begin{proof}
 Note that $u=t^{-1} t u$, thus the desired result follows from Theorem \ref{tulg}.
\end{proof}

If the condition $S(t) < +\infty$ is reduced to the condition $D(t) < +\infty$,
then
the conclusion in
Theorem \ref{tulg} does not hold.
A counterexample
is
given in the following.

\begin{eap}\label{dnp}{\rm
  Consider
$t_a \in T$, $a\in (0,1)$,   in Example \ref{tgf}
and
 $u$ given in Example \ref{r0np}.
Then
\begin{align*}
 d_1(u, \widehat{\{0\}}) & =   \int_0^{1}  H([u]_{\alpha}, \{0\})  \;d\alpha
\\
& = \int_0^{1} \alpha^{-0.6}  \;d\alpha  =2.5,
\end{align*}
and therefore $u \in  F^1_{USCG}(\mathbb{R})^1 $.

Then
\begin{align*}
  d_1 (t_{0.3}u, \widehat{\{0\}}) & = \int_0^{1}  H([t_{0.3} u]_{\alpha}, \{0\}) \;d\alpha
\\
& \geq    \int_0^{\sqrt{0.3}}  H([t_{0.3} u]_{\alpha}, \{0\}) \;d\alpha
\\
&  =   \int_0^{\sqrt{0.3}}  \alpha^{-1.2}  \;d\alpha  =  +\infty.
\end{align*}
So
$t_{0.3}u \notin    F^1_{USCG}(\mathbb{R})^1$. In fact, it can be checked
that
$t_{a}u \notin    F^1_{USCG}(\mathbb{R})^1$ for $a\in (0,1)$.

Note that $D(t_a) \to 0$ as $a\to 0$.
So
even if $u \in F^1_{USCG}(X)^p$ and $t\in T$ with $D(t) $ being less than any positive number required,
$tu$ is still not   necessarily in
$F^1_{USCG}(X)^p$.

}
\end{eap}

\begin{re}
{\rm
  Example \ref{r0np} indicates that there exist $u\in F(\mathbb{R})$ and $t\in T$ such that
$S(t)=+\infty$ and $u$ and $tu$ are both in $F^1_{USCG}(\mathbb{R})^p$.
}
\end{re}

To show $\rho_1$ convergence can imply $d_p$ convergence on $F^1_{USCG} (X)^p$,
we
need
a fact
that
 $d_p(u, tu) \to 0$ as $S(t) \to 0$ when $u\in F^1_{USCG} (X)^p$. We begin with some lemmas.

\begin{lm} \label{vemg}
Suppose that $u \in F^1_{USCG} (X)$. Given $h>0$ and $\varepsilon>0$. Then there exist
$\al_1$, $\al_2$, \ldots, $\al_k$ and $\delta>0$ such that
$h=\al_1< \al_2<\cdots<\al_k=1$,
\begin{gather*}
  w_u(h, h-\delta ) < \varepsilon,  \   \mbox{and}
\\
  w_u(\al_{i}, \al_{i+1} ) < \varepsilon, \   i=1,2,\ldots, k-1.
\end{gather*}

\end{lm}

\begin{proof}
Note that the cut-function $[u](\cdot) : [0,1] \to (C(X), H)$
is
 left-continuous at $h$,
the
proof can proceed similarly to that of Lemma \ref{vem}.

\end{proof}

\begin{lm} \label{edpg}
  Suppose that     $u \in F^1_{USCG}(X)^p$ and $h>0$.
Then
$$ \left(\int_h^{1}    H([u]_\alpha, [t u]_\al )^p \;d\alpha  \right)^{1/p}  \to 0$$
as
 $D(t) \to 0$.
\end{lm}

\begin{proof}
By using Lemma \ref{vemg}, the proof can proceed similarly
 to the proof of
  Lemma \ref{pen}.

\end{proof}

The following important property of Lebesgue integral is useful in the proof of Theorem \ref{dpu}.

\begin{itemize}
  \item
\textbf{Absolute continuity of Lebesgue integral}. Suppose that $f$ is Lebesgue integrable on $E$, then for arbitrary $\varepsilon>0$, there is a $\delta>0$ such that $\int_A f \;dx < \varepsilon$ whenever $A \subseteq E$ and $m(A)<\delta$.

\end{itemize}

\begin{tm} \label{dpu}

Let $u \in F^1_{USCG}(X)^p$.
Then
  $d_p(u, tu) \to 0$ as $S(t) \to 0$.
 \end{tm}

\begin{proof}

Given $\varepsilon>0$.
From the absolute continuity of Lebesgue integral,
there
is a $\theta>0$ such that
for all $0\leq h \leq   \theta$
\begin{gather*}\label{h1e}
  \left(\int_0^{h}    H([u]_\alpha, \{x_0\})^p \;d\alpha  \right)^{1/p}   \leq   \varepsilon/3.
\end{gather*}

Choose $\xi>0$ satisfies that if $S(t) < \xi$
then
$t^{-1} (\alpha) > \al / 1.1$ for all $\al \in [0,1]$.
Thus, for all $0\leq h \leq  \theta$ and $t\in T$ with $S(t) < \xi$
\begin{align*}
 \mbox{} & \mbox{} \left(   \int_0^{h}  H  ([tu]_\alpha, \{x_0\})^p \;d\alpha  \right)^{1/p}
\\ &=
 \left(\int_0^{h}    H([u]_{t^{-1}(\alpha)}, \{x_0\})^p \;d\alpha  \right)^{1/p}
\\
& \leq  \left(\int_0^{h}    H([u]_{\alpha/1.1}, \{x_0\})^p \;d\alpha  \right)^{1/p}
\\
& =    \left(\int_0^{h/1.1}   1.1 H([u]_{\alpha}, \{x_0\})^p \;d\alpha  \right)^{1/p}
\\
& \leq 1.1 \varepsilon/3.
\end{align*}

From Lemmas    \ref{der} and \ref{edpg},
there is a
$\eta>0$ such that $$ \left(\int_\theta^{1}    H([u]_\alpha, [t u]_\al )^p \;d\alpha  \right)^{1/p} < \varepsilon/6$$
when
$S(t) < \eta$.

So for $t\in T$ with $S(t) < \zeta= \min\{\xi, \eta\}$
\begin{align*}
  d_p  & (u, t u) =   \left(\int_0^{1}    H([u]_\alpha, [t u]_\al )^p \;d\alpha  \right)^{1/p}
 \\
 & \leq  \left(\int_0^{\theta}    H([u]_\alpha, [t u]_\al )^p \;d\alpha  \right)^{1/p}
+
\left(\int_\theta^1    H([u]_\alpha, [t u]_\al )^p \;d\alpha  \right)^{1/p}
 \\
   & \leq    \left(\int_0^{\theta}    H([u]_\alpha, \{x_0\})^p \;d\alpha  \right)^{1/p}
+
 \left(\int_0^{\theta}    H([t u]_\alpha, \{x_0\})^p \;d\alpha  \right)^{1/p}
+
   \left(\int_\theta^1    H([u]_\alpha, [t u]_\al )^p \;d\alpha  \right)^{1/p}
\\
& \leq \varepsilon/3 +  1.1\varepsilon/3 +  \varepsilon/6  < \varepsilon.
\end{align*}

\end{proof}

\begin{tm} \label{prs}
  Suppose that $u\in  F^1_{USCG}(X)^p$ and $u_n \in F^1_{USC} (X)$, $n=1,2,\ldots$.
If
 $\rho_1(u_n, u) \to 0$, then $d_p(u_n, u) \to 0$.
\end{tm}

\begin{proof}
The proof is similarly to that of Theorem \ref{dpc}.

Given $\varepsilon>0$. From Theorem \ref{dpu} there is a $\zeta>0$ such that
$d_p(u, tu) < \varepsilon/2$
for all $S(t) < \zeta$.

Since
 $\rho_1(u_n, u) \to 0$, then there exists an $N$
such that
 $\rho_1(u_n, u) < \nu= \min\{\varepsilon/2,  \zeta\}$ for $n \geq N$.
This means
that, for each $n \geq N$,
there is a $t_n \in T$ such that
$S(t_n) < \nu $
and
$d_\infty (u_n, t_n u) < \nu$ .

So
\begin{align*}
  d_p  (u_n, u) & \leq   d_p (u_n, t_n u) +   d_p (t_n u,  u)
\\
& \leq   \nu + \varepsilon/2 \leq \varepsilon
\end{align*}
for all $n \geq N$.

\end{proof}

Based on the results obtained in this paper, it can be seen 
that
the conclusion in Theorems \ref{dpu} and \ref{prs} can also be proved by using the
the Lebesgue's Dominated Convergence Theorem.

In this section, we know that
the Skorokhod-type metric $\rho_1$ is stronger than the $d_p$ metric on $F^1_{USCG}(X)^p$. In Section \ref{uscb}, we find
 that
the Skorokhod metric $\rho_0$ is not necessarily
stronger
than the $d_p$ metric on $F^1_{USCG}(X)^p$.
 These facts
indicate that
the Skorokhod metric $\rho_0$ is not necessarily equivalent to the  Skorokhod-type metric $\rho_1$
on
$F^1_{USCG}(X)^p$.
This is a conclusion in Section 3.

The following example
is given to show that the $d_p$ metric is not stronger than
the Skorokhod-type metric $\rho_1$
on
$F^1_{USCG}(\mathbb{R})^p \backslash F^1_{USCB}(\mathbb{R})$.

\begin{eap}
{\rm
  Consider
  $u\in   F^1_{USCG}(\mathbb{R})^1 \setminus F^1_{USCB}(\mathbb{R})$ defined as
$$[u]_\gamma = [0, \gamma^{-0.6}]$$ for all $\gamma>0$,
and
$t_{0.3,\theta}$, $\theta \in  (0,1)$ defined in Example \ref{r0np}.
Then $t_{0.3, \theta} u \in   F^1_{USCG}(\mathbb{R})^1 \setminus  F^1_{USCB}(\mathbb{R})$ for $\theta \in (0.6,1)$.

It
can be checked
that
$\rho_1(u, t_{0.3, \theta_n} u ) \not\to 0$ as $\theta_n \to 1-$.

On the other hand,
it follows from the Lebesgue's Dominated Convergence Theorem
that
$d_1(u, t_{0.3, \theta_n} u ) \to 0$ as $\theta_n \to 1-$.

So the $d_1$ metric is not stronger than the Skorokhod-type metric $\rho_1$ on  $F^1_{USCG}(\mathbb{R})^1 \setminus F^1_{USCB}(\mathbb{R})$.
A similarly example can
show
that
 the $d_p$ metric is not stronger than the Skorokhod-type metric $\rho_1$ on  $F^1_{USCG}(\mathbb{R})^p \setminus F^1_{USCB}(\mathbb{R})$.

}
\end{eap}

\begin{re}{\rm
  From this example, we can see that even if a fuzzy set sequence is both $\rho_0$ convergence and $d_p$ convergence, it is not necessarily be $\rho_1$ convergence. 
  
  Let $v$ be defined as $[v]_\gamma = [0, \gamma^{-0.4/p}]$, $\gamma >0$, and let $t_{1/n}$ be defined as in Example \ref{tgf}.
  Similarly, it can be checked that $\{t_{1/n} v\}$ is a sequence in  $F^1_{USCG}(\mathbb{R})^p \setminus F^1_{USCB}(\mathbb{R})$ which is  $\rho_0$ convergence and $d_p$ convergence but is not $\rho_1$ convergence.
}
\end{re}

\section{Relation between Skorokhod-type metric $\rho_1$ and $d_p$ metric on $F^1_{USC}(X)$} \label{usc}

In this section,
we show
that, unlike the case of $F^1_{USCG}(X)^p$,
the $\rho_1$ convergence    is not necessarily the $d_p$ convergence on $F^1_{USC}(X)$ by a counterexample.

The following example indicates
that
the
 $\rho_1$ convergence    is not necessarily the $d_p$ convergence on $F^1_{USCG}(X) \backslash  F^1_{USCG}(X)^p$.

\begin{eap}
  {\rm
    Consider    $u\in  F^1_{USCG}(\mathbb{R}) \backslash  F^1_{USCG}(\mathbb{R})^p$ defined as
$$[u]_\gamma = [0, \gamma^{-1.6}]$$ for all $\gamma>0$.

Let $t^a$, $a\in (0,1)$, defined as
\[
 t^a(\xi)=\left\{
            \begin{array}{ll}
              (1+a)\xi,    & \xi\in [0,\frac{1}{2}], \\
               (1-a) \xi + a,   &  \xi \in [\frac{1}{2}, 1].
            \end{array}
          \right.
\]
Then $t^a\in T$,
$t^a u \in  F^1_{USCG}(\mathbb{R}) \backslash  F^1_{USCG}(\mathbb{R})^p$
 for $a\in (0,1)$, and
 $\rho_1 (u, t^a u) \to 0$ as $a\to 0$.

On the other hand, for $a\in (0,1)$
  \begin{align*}
  d_p (u, t^a u) & =  \left(  \int_0^1   H([u]_\gamma, [t^a u]_\gamma)^p \, d\gamma \right)^{1/p}
  \\
  & \geq  \left(  \int_0^{1/2}   H([0, \gamma^{-1.6}],  \ \   [0,    (  (1+a)^{-1} \gamma  )^{-1.6} ]  )^p \, d\gamma \right)^{1/p}
  \\
  & = +\infty.
  \end{align*}

So the $\rho_1$ convergence need not imply the $d_p$ convergence on $F^1_{USCG}(\mathbb{R}) \backslash  F^1_{USCG}(\mathbb{R})^p$.

  }
\end{eap}

It can also be checked
that
the
 $\rho_1$ convergence    is not necessarily the $d_p$ convergence on $F^1_{USC}(X) \backslash  F^1_{USCG}(X)$.

From the results in Sections \ref{uscb}, \ref{uscglp} and \ref{usc},
the following statements are true
for $u$, $u_n$ in $F^1_{USC}(X)$, $n=1,2,\ldots$.     .
\begin{enumerate}
\renewcommand{\labelenumi}{(\roman{enumi})}

 \item  If $u \in F^1_{USCB}(X)$, then $\rho_0(u_n, u) \to 0$ can imply $d_p(u_n, u) \to 0$.

\item   If $u \in F^1_{USCG}(X)^p$, then $\rho_1(u_n, u) \to 0$ can imply $d_p(u_n, u) \to 0$. However
 $\rho_0(u_n, u) \to 0$
need not imply $d_p(u_n, u) \to 0$.

 \item   If $u \in F^1_{USC}(X)$, then
 $\rho_1(u_n, u) \to 0$
need not imply $d_p(u_n, u) \to 0$.

\end{enumerate}

\section{Relation between Skorokhod metric and sendograph metric on $F_{USC}^1(X)$}

In this section,
it is found that
 the Skorokhod metric is stronger than the sendograph metric and the endograph metric on $F_{USC}^1(X)$.
The sendograph metric is stronger than the endograph metric (see Section \ref{fsm}).
A
counterexample is given
to show
that the sendograph metric convergence need not imply the Skorokhod metric convergence on $F_{USCB}^1(\mathbb{R})$.

For $u\in F_{USC}^1(X)$, the symbol $P_0(u)$ is used to denote the set $\{\al\in (0,1):  \lim_{\beta\to \al} H([u]_\beta, [u]_\al) \not=0 \}$.

\begin{tm} \label{d0e}
  Let
 $u_n$, $u$, $n=1,2,\ldots$, be fuzzy sets in $F_{USC}^1(X)$.
If $\rho_0(u_n,  u) \to 0$,
  then
\\
(\romannumeral1)  $H([u_n]_0, [u]_0) \to 0$,
\\
(\romannumeral2)  $H([u_n]_1, [u]_1) \to 0$,
\\
(\romannumeral3)  $H_{\rm end} (u_n, u) \to 0$,
\\
(\romannumeral4)  $H_{\rm send} (u_n, u) \to 0$,
and
\\
(\romannumeral5)  $H ([u_n]_\al, [u]_\al) \to 0$ for all $\al\in (0,1) \backslash P_0(u)$.
\end{tm}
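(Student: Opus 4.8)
The plan is to reduce all four assertions to the two estimates packaged in the Skorokhod distance. Since $d_0(u_n,u)\to 0$, for each $n$ I would choose $\var_n\to 0$ with $d_0(u_n,u)<\var_n$ and, by definition of the infimum, a time change $t_n\in T$ with $D(t_n)\le\var_n$ and $d_\infty(u_n,t_nu)\le\var_n$; written through the cuts this reads
\[
\sup_{\al\in[0,1]}|t_n(\al)-\al|\le\var_n,\qquad \sup_{\al\in[0,1]}H([u_n]_\al,[u]_{t_n(\al)})\le\var_n .
\]
The single most useful observation is that every $t\in T$ is an increasing continuous bijection of $[0,1]$ onto itself, so it fixes the endpoints: $t_n(0)=0$ and $t_n(1)=1$. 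This settles (i) and (ii) immediately, since then $H([u_n]_0,[u]_0)=H([u_n]_0,[u]_{t_n(0)})\le\var_n\to0$, and likewise at level $1$.

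For (iv) I would fix $\al\in(0,1)\setminus P_0(u)$, insert the intermediate cut $[u]_{t_n(\al)}$, and apply the triangle inequality for the Hausdorff metric:
\[
H([u_n]_\al,[u]_\al)\le H([u_n]_\al,[u]_{t_n(\al)})+H([u]_{t_n(\al)},[u]_\al).
\]
The first term is $\le\var_n\to0$ by the second displayed estimate. For the second term, note $|t_n(\al)-\al|\le\var_n\to0$, so $t_n(\al)\to\al$; and $\al\notin P_0(u)$ means precisely that $\be\mapsto[u]_\be$ is $H$-continuous at $\al$, whence $H([u]_{t_n(\al)},[u]_\al)\to0$ as well.

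For (iii) I would bound $H_{\rm end}(u_n,u)$ directly by a fixed multiple of $\var_n$ by proving the two Hausdorff inclusions between ${\rm end}\,u_n$ and ${\rm end}\,u$. Given $(x,\al)\in{\rm end}\,u_n$ with $\al>0$, one has $x\in[u_n]_\al$, so the cut estimate supplies $y\in[u]_{t_n(\al)}$ with $d(x,y)\le\var_n$; then $(y,t_n(\al))\in{\rm end}\,u$, and since $|t_n(\al)-\al|\le\var_n$ this pair is within $\var_n$ of $(x,\al)$ (the slice $X\times\{0\}$ lies in both endographs and contributes nothing). For the reverse inclusion, given $(y,\be)\in{\rm end}\,u$ with $\be>0$, I set $\al:=t_n^{-1}(\be)$, so $t_n(\al)=\be$ and, because $D(t_n^{-1})=D(t_n)\le\var_n$, also $|\al-\be|\le\var_n$; the cut estimate at level $\al$ then yields $x\in[u_n]_\al$ with $d(x,y)\le\var_n$, and $(x,\al)\in{\rm end}\,u_n$ is within $\var_n$ of $(y,\be)$. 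Hence $H_{\rm end}(u_n,u)\to0$.

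The routine items are (i), (ii) and (iv). The step requiring the most care is (iii): one must pass to the inverse time change $t_n^{-1}$ for the reverse inclusion (relying on the identity $D(t_n^{-1})=D(t_n)$), be explicit about the product metric on $X\times[0,1]$ defining $H_{\rm end}$ and the resulting numerical constant, and verify that the infima in the Hausdorff distance are controlled even when they are not attained, using that the cuts are closed. I expect this endograph estimate — in essence the inequality $H_{\rm end}(u,v)\lesssim d_0(u,v)$ — to be the real content of the theorem.
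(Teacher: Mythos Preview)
Your proposal is correct and follows essentially the same route as the paper: (\romannumeral1)--(\romannumeral2) from the fact that every $t\in T$ fixes $0$ and $1$, (\romannumeral4) by the triangle inequality through the intermediate cut $[u]_{t_n(\al)}$ together with $H$-continuity of $\be\mapsto[u]_\be$ at $\al\notin P_0(u)$, and (\romannumeral3) by bounding both one-sided Hausdorff distances between the endographs using $D(t_n)$ and the cut estimate, passing to $t_n^{-1}$ for the reverse inclusion. The only cosmetic differences are that the paper writes the time change as acting on $u_n$ (so its $t_n$ is your $t_n^{-1}$) and implicitly uses the max metric on $X\times[0,1]$, which makes your anticipated constant equal to~$1$.
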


\begin{proof}

Note that $t(0)=0$ and $t(1)=1$ for each $t\in T$.
So
\begin{gather*}
    \rho_0(u,v) \geq  H([u]_0, [v]_0),
     \\
   \rho_0(u,v) \geq  H([u]_1, [v]_1)
  \end{gather*}
for all $u,v \in F_{USC}^1(X)$ and
therefore
(\romannumeral1) and (\romannumeral2) are true.

To prove (\romannumeral3).
Given $\varepsilon>0$.
Since
$\rho_0(u_n, u) \to 0$, then there exists $N$, for each $n \geq N$, there is a $t_n \in T$ such that
$d_\infty (t_n u_n, u) < \varepsilon/2$
and
$D(t_n) < \varepsilon/2$.
Thus
\begin{align*}
H^* & ({\rm end}\,u, {\rm end}\, u_n)
\\
& = \sup \{  d  ( (x,\al),  {\rm end}\,u_n ) : (x,\al) \in {\rm end}\,u \}
\\
   & \leq  \sup \{   H([u]_\al,   [u_n]_{t_n^{-1}(\al)} )    +   \varepsilon/2   : (x,\al) \in {\rm end}\,u \}
\\
&\leq d_\infty (t_n u_n, u)   + \varepsilon/2
\\
& \leq \varepsilon
\end{align*}
and
\begin{align*}
H^* & ({\rm end}\,u_n, {\rm end}\, u)
\\
& = \sup \{  d  ( (x,\al),  {\rm end}\,u ) : (x,\al) \in {\rm end}\,u_n \}
\\
   & \leq  \sup \{   H([u_n]_\al,   [u]_{t_n(\al)} )    +   \varepsilon/2   : (x,\al) \in {\rm end}\,u_n \}
\\
&\leq d_\infty (t_n u_n, u)   + \varepsilon/2
\\
& \leq \varepsilon.
\end{align*}
From the arbitrariness of $\varepsilon>0$,
$$
H_{\rm end} (u_n, u)
=
 \max\{ H^*  ({\rm end}\,u, {\rm end}\, u_n), H^*  ({\rm end}\,u_n, {\rm end}\, u)\} \to 0.$$
So (\romannumeral3) is true.

The proof of (\romannumeral4) is very similar to that of  (\romannumeral3).

To prove (\romannumeral5),
  suppose that $\al \in (0,1)\backslash P_0(u)$.
  Given $\varepsilon>0$.
  There exists a $\delta>0$
  such that
  \begin{equation}\label{usf}
  H([u]_\beta, [u]_\al) < \varepsilon/2
  \end{equation}
   for all $\beta \in
(\al-\delta, \al+\delta)$.

  From $\rho_0(u_n,  u) \to 0$, we know
  that
  there is an $N$ such that $\rho_0(u_n,  u) < \zeta=\min \{\delta, \varepsilon/2\}$ for all $n \geq N$.
This means that for each $n \geq N$,
 there is
 a $t_n$ such that
 \begin{equation}\label{ulg}
   d_\infty ( u_n, t_n u) < \zeta
  \   \mbox{and}  \
  D(t_n)< \zeta
 \end{equation}
By \eqref{usf} and \eqref{ulg}, for all $n \geq N$,
  \begin{align*}
                    H  & ([u_n]_\al, [u]_\al)
                     \\
                     & \leq    H([u_n]_\al, [u]_{t_n^{-1} (\al) } )   +      H([u]_{t_n^{-1} (\al) }, [u]_\al)
                     \\
                      & \leq    d_\infty ( u_n, t_n u)   +      H([u]_{t_n^{-1} (\al) }, [u]_\al)
                     \\
                      & <  \zeta + \varepsilon/2    \leq \varepsilon.
  \end{align*}
From the arbitrariness of $\varepsilon>0$, $ H [u_n]_\al, [u]_\al) \to 0$.
So
(\romannumeral5) is true.

  \end{proof}

\begin{re}{\rm
  The (\romannumeral4) in Theorem \ref{d0e} can also be deduced from the relation of sendograph metric and endograph metric (see Section \ref{fsm}) and
the (\romannumeral1) and (\romannumeral3)  in Theorem \ref{d0e}.
}
\end{re}

Theorem \ref{d0e} indicates that the Skorokhod metric convergence
can
imply
the sendograph  metric convergence on $F_{USC}^1(X)$.
However, the converse implication does not hold.
The
following is an example
of a sequence in $F_{USCB}^1(\mathbb{R})$
which is
sendograph metric convergence but is not Skorokhod metric convergence.

\begin{eap} \label{enr}{\rm
  Consider
\[
     u(x)=\left\{
           \begin{array}{ll}
             1, & x=0, \\
             \frac{1}{2}, & x\in (0,2], \\
              0, & x\notin [0,2],
           \end{array}
         \right.
\]
and
\[
  u_n(x)=\left\{
           \begin{array}{ll}
             1   -  \frac{1}{2}  x^{1/n}, & x\in [0,1], \\
             \frac{1}{2} (1-(x-1)^n), & x\in [1,2], \\
             0, & x\notin [0,2],
           \end{array}
         \right.  n=1,2,\ldots.
    \]
So $u$ and $u_n$, $n=1,2,\ldots$ are in $F^1_{USCB}(\mathbb{R})$, and
\[
     [u]_\al=\left\{
           \begin{array}{ll}
             \{0 \}, & \al\in (1/2, 1], \\
             \mbox{} [0,  2 ], &  \al \in [0, 1/2],
           \end{array}
         \right.
\]
and
\[
     [u_n]_\al=\left\{
           \begin{array}{ll}
             [0, (2-2\alpha)^n], & \al\in [1/2, 1], \\
             \mbox{} [0,   1+ (1-2\alpha)^{1/n} ], &  \al \in [0, 1/2].
           \end{array}
         \right.
\]
Note that $[u_n]_{1/2}  \equiv   [0,1]$, so for all $n=1,2,\ldots$,
$$\rho_0(u_n,u) \geq 1 .$$
In fact it can be
checked
that
 $\rho_0(u_n,u) \equiv 1$.

On the other hand, since
$$H([u_n]_\al, [u]_\al) \to 0 \ \mbox{for all}\ \al\in [0,1]\backslash \{ \frac{1}{2}\}.$$
Thus by
Theorem 6.4 in \cite{huang},
$H_{\rm end} (u_n, u) \to 0$, and then $H_{\rm send} (u_n, u) \to 0$.
So
$\{u_n\}$ and $u$ satisfy
statements
(\romannumeral1)-(\romannumeral5) in Theorem \ref{d0e}.
But
$\rho_0(u_n, u) \not\to 0$.

In addition, we can see that $d_p (u_n, u) \to 0$.
So this example also
indicates
that
the $d_p$ metric convergence
need not imply
the Skorokhod metric $\rho_0$ convergence
on $F^1_{USCB} (\mathbb{R})$.
This fact can also be derived from the conclusions in \cite{huang}, see Section \ref{cou}.

}
\end{eap}

\section{Conclusion} \label{cou}

In this paper, we first discuss the relation between the Skorokhod metric $\rho_0$ and   the Skorokhod-type metric $\rho_1$.
$\rho_1$ is stronger than $\rho_0$ on $F^1_{USC} (X)$.
It is found
that
$\rho_1$ is equivalent to $\rho_0$ on $F^1_{USCB} (X)$,
and
that
$\rho_1$ is not necessarily equivalent to $\rho_0$ on $F^1_{USCG} (X)^p$.

Then
we investigate
 relation between these two metrics and $d_p$ metric.
It is found
that
the compactness of $\al$-cuts and the integrability of fuzzy sets play important roles.
On $F^1_{USCB} (X)$,
the Skorokhod metric $\rho_0$  is stronger than the $d_p$ metric.
 On $F^1_{USCG} (X)^p$,
the Skorokhod-type metric $\rho_1$ is still stronger than the $d_p$ metric,
however the
Skorokhod metric $\rho_0$  is not necessarily stronger than the $d_p$ metric.
On $F^1_{USC} (X)$,
even the
Skorokhod-type metric $\rho_1$  is not necessarily stronger than the $d_p$ metric.
We also show that the Skorokhod metric $\rho_0$  is stronger than the sendograh metric on $F^1_{USC} (X)$.

Our recent results on level decomposition properties
of
the endograph metric can immediately imply that
 $H_{\rm send}(u_n, u) \to 0$ is equivalent to $d_p(u_n, u) \to 0$ and $H ([u_n]_0, [u]_0) \to 0$ on $F^1_{USCB} (\mathbb{R}^m)$
(see the end of Section 6 or Theorem 6.4 in \cite{huang}).
So
the statement in \cite{kim}
that
the Skorokhod metric $\rho_0$ is stronger than the sendograph metric
on $F^1_{USCB} (\mathbb{R}^m)$
can be derived
from
the statement in \cite{joo2} that the Skorokhod metric $\rho_0$ is stronger than the $d_p$ metric
on $F^1_{USCB} (\mathbb{R}^m)$.
We
find some interesting
relations among the metrics on fuzzy sets which will be presented in the future work.

\appendix
\section{  The proof of Theorem \ref{dpsm}   }

\begin{proof}
  To prove that $d_p^*$ is a metric, we need to show that, for all $u,v, w$ in $F^1_{USC} (X)$,
\\
(\romannumeral1) \
$d_p^*(u,v)\geq 0$ and
$d_p^*(u,v)=0$ is equivalent to $u=v$,
\\
(\romannumeral2) \ $d_p^*(u,v) = d_p^*(v,u)$, and
\\
(\romannumeral3) \
$d_p^*(u,v) \leq  d_p^*(u,w) +  d_p^*(v,w)$.

(\romannumeral1) Obviously $d_p^*(u,v)\geq 0$.
 Now we show
that
$d_p^*(u,v)=0$ is equivalent to $u=v$.

If $u=v$, then $d_p^*(u,v) = d_p (u,v) =0$.

If $u \not=  v$, then there is an $\al>0$
such that
$[u]_\al \not= [v]_\al$. We claim that outer measure $m^*$ of the set $S :=\{ \beta \in  [0,\al]:  [u]_\beta \not= [v]_\beta \}$
is
greater than $0$. We proceed by contradiction. Suppose $m^* (S) =0$. Then
$[\al-\varepsilon, \al) \not \subseteq  S$ for each $\varepsilon>0$, and therefore
there is a sequence $\{  \al_n, n=1,2,\ldots  \}$
with
$\al_n \in  [\al-\frac{1}{n}, \al)$ and $[u]_{\al_n} =  [v]_{\al_n}$.
This contradicts
with
$[u]_\al \not= [v]_\al$.

Since $m^*(S) >0$, then there exists $k>0$ and $\varepsilon_0>0$
such that
$m^*(\{ H([u]_\bullet, [v]_\bullet) > 1/k\})   > \varepsilon_0$.
So
if $f$ is a measurable function on $[0,1]$ with $f(\cdot)  \geq H([u]_\bullet, [v]_\bullet)$,
then
   $m(f> 1/k) > \varepsilon_0$.
This implies
that
$d_p^*(u,v) >0$.

(\romannumeral2) holds obviously.

(\romannumeral3)
If $f$ is a measurable function on $[0,1]$ with $f(\cdot)  \geq H([u]_\bullet, [w]_\bullet)$
and
 $g$ is a  measurable function on $[0,1]$ with $g(\cdot)  \geq H([v]_\bullet, [w]_\bullet)$,
then
$f+g$ is a  measurable function on $[0,1]$ with $(f+g)(\cdot)  \geq H([u]_\bullet, [v]_\bullet)$.
So
\begin{align*}
 d_p^*(u,v) & \leq   \left(\int_0^1  (f(\al)+g(\al))^p  \,   d\al   \right)^{1/p}
\\
&  \leq   \left(\int_0^1  f(\al)^p  \,   d\al   \right)^{1/p}    +     \left(\int_0^1  g(\al)^p  \,   d\al   \right)^{1/p}.
\end{align*}
From the arbitrariness of $f$ and $g$,
$$ d_p^*(u,v) \leq  d_p^*(u,w) +  d_p^*(v,w). $$

\end{proof}

\end{document}